\providecommand{\@LN}[2]{}
\newtheorem{theorem}{Theorem}[section]
\newtheorem{lemma}[theorem]{Lemma}
\newtheorem{remark}{Remark}
\title 
      {Using a kinetic BGK model to determine transport coefficients of gas mixtures}
\date{}
\begin{document}
\maketitle

\centerline{\scshape Christian Klingenberg and Marlies Pirner$^*$}
\medskip
{\footnotesize
 \centerline{University of W\"urzburg, }
   \centerline{ Emil-Fischer-Str. 40, }
   \centerline{ 97074 W\"urzburg, Germany}
} 



\bigskip


\begin{abstract}
We consider a non reactive two component gas mixture. In a macroscopic description of a gas mixture we expect  four physical coefficients characterizing the physical behaviour of the gases to appear. These are the diffusion coefficient, the viscosity coefficient, the heat conductivity and the thermal diffusion parameter in the Navier-Stokes equations. We present a Chapman-Enskog expansion of a kinetic model for gas mixtures by Klingenberg, Pirner and Puppo, 2017 that has three free parameters in order to capture three of these four physical coefficients. In addition, we propose several possible extensions to an ellipsoidal statistical model for gas mixtures in order to capture the fourth coefficient. 
\end{abstract}

\textbf{AMS subject classification}: 35Q30, 76N15, 80A20, 82C40, 82C70
\\ 

\textbf{keywords:} multi-fluid mixture, kinetic model, ES-BGK approximation, transport coefficients, Chapman-Enskog expansion

\section{Introduction}
 In this paper we shall concern ourselves with a kinetic description of gases. This is traditionally done via the Boltzmann equation for the density distributions $f_1$ and $f_2$. Under certain assumptions the complicated interaction terms of the Boltzmann equation can be simplified by a so called BGK approximation (approximation invented by Bhatnagar, Gross and Krook in \cite{Bhatnagar}), consisting of a collision frequency multiplied by the deviation of the distributions from local Maxwellians.  This approximation should be constructed in a way such that it  has the same main properties of the Boltzmann equation namely conservation of mass, momentum and energy, further it should have an H-theorem with its entropy inequality and the equilibrium must still be Maxwellian.  BGK  models give rise to efficient numerical computations, which are asymptotic preserving, that is they remain efficient even approaching the hydrodynamic regime \cite{Puppo_2007,Jin_2010,Dimarco_2014,Bennoune_2008,Bernard_2015,Crestetto_2012}. However, the drawback of the BGK approximation  is its incapability of reproducing the correct Boltzmann hydrodynamic regime in the asymptotic continuum limit. Therefore, a modified version called ellipsoidal statistical model (ES-BGK approximation) was suggested  by Holway in the case of one species \cite{Holway}. The H-Theorem of this model then was proven in \cite{Perthame} and existence and uniqueness of solutions in \cite{Yun}.
 
 Here we shall focus on gas mixtures modelled via an ES-BGK approach. In the literature there is a BGK model for gas mixtures  suggested by Andries, Aoki and Perthame in \cite{AndriesAokiPerthame2002} which contains only one collision term on the right-hand side. In \cite{AndriesAokiPerthame2002} the hydrodynamic limit obtained by performing the Chapman-Enskog expansion is considered. Since the model does lead to the right hydrodynamic regime, one extension of this model to an ES-BGK model to an ES-BGK model for gas mixtures is given by Brull in \cite{Brull}. His extension is based on an entropy minimization problem and leads to a correct Prandtl number in the Navier-Stokes equations. Another extension is given by Groppi, Monica and Spiga in \cite{Groppi}. They noticed that in the case of a gas mixture there is not only one physical quantity, the Prandtl number, which they want to fix in the right way, there are also the diffusion coefficient and the thermal diffusion parameter. In \cite{Groppi}, with the proposed model there, they are able to fix the diffusion parameter and the Prandtl number but not the thermal diffusion parameter. There are also hydrodynamic limits for kinetic models dealing with reactive mixtures, and for mixtures related to the description of polyatomic molecules, see for example \cite{Bisi_2017,Bisi_2011,Kremer_2006} and references therein.


 In \cite{Pirner}, Klingenberg, Pirner and Puppo present a BGK model for gas mixtures which contains three free parameters from a modelling point of view. In this paper we are interested in an extension to an ES-BGK model of this BGK model for gas mixtures \cite{Pirner}. The advantage of this model is that we already have free parameters that open the possibility to determine macroscopic physical quantities like viscosity or heat conductivity when performing the Chapman-Enskog expansion  to obtain the Navier-Stokes equations.


The outline of this chapter is as follows: in section \ref{sec7.1} we want to motivate the ES-BGK model for one species. In section \ref{sec7.3} we briefly repeat the main issues of the BGK model for gas mixtures in \cite{Pirner}. In section \ref{sec7.3.6.1}, we  introduce the macroscopic equations and quantities Groppi, Monica and Spiga \cite{Groppi} expect to have in the case of gas mixtures. In section  \ref{sec7.5} the Chapman-Enskog expansion for the BGK model for mixtures from section \ref{sec7.3} is performed  in order to arrive at the transport coefficients. Our BGK model used for this derivation has free parameters that are shown to be useful in this derivation. 
In section \ref{sec7.4}, we suggest extensions to an ES-BGK model for mixtures.
\section{Motivation of the ES-BGK model for one species}
\subsection{The BGK and ES-BGK model for one species }
\label{sec7.1}
In  this section we briefly recall the BGK model for one species and the extension to an ES-BGK model in the case of one species proposed by \cite{Holway} in order to see the effect of the ES-BGK extension. This is presented in more details in \cite{AndriesPerthame2001}. \\
For one species the BGK equation is given by
$$\partial_t f + v \cdot \nabla_x f = \nu n (M(f)-f),$$
with the collision frequency $\nu(x,t)$, the distribution function $f(x,v,t)>0$ where $x \in \mathbb{R}^3$, $v \in \mathbb{R}^3$ are the phase space variables and $t\geq 0$ the time.
We relate the distribution functions to  macroscopic quantities by mean-values of $f$ via
\begin{align}
\begin{split}
\int f(v) \begin{pmatrix}
1 \\ v \\ m |v-u|^2 \\ 
\end{pmatrix} 
dv =: \begin{pmatrix}
n \\ n u \\ 3 n T 
\end{pmatrix},
\end{split}
\label{macrosquone}
\end{align}
where $m$ is the mass of a particle, $n$ is the number density, $u$ the mean velocity and $T$ the temperature which is related to the pressure $p$ by $p=n T$. Note that in this paper we shall write $T$ instead of $k_B T$, where $k_B$ is Boltzmann's constant. Then we can define the Maxwellian $M(f)$ by
\begin{align} 
M(x,v,t) = \frac{n}{\sqrt{2 \pi \frac{T}{m}}^3} \exp \left({- \frac{|v-u|^2}{2 \frac{T}{m}}} \right),
\end{align}
For the ES-BGK model we now replace the Maxwell distribution $M(f)$ by another relaxation operator and consider the new equation
$$\partial_t f + v \cdot \nabla_x f = \nu n (G(f)-f),$$
 where $G(f)$ is not a Maxwell distribution any more. Instead of the scalar temperature in the Maxwell distribution we take a linear combination of the temperature and the pressure tensor: 
\begin{align}
 G(f) = \frac{n}{\sqrt{\det ( 2 \pi \frac{\mathcal{T}}{m}})} e^{- \frac{1}{2} (v-u) \cdot (\frac{\mathcal{T}}{m})^{-1} \cdot (v-u)},
 \label{GES}
 \end{align}
  where $$\mathcal{T}= (1- \tilde{\mu}) T \textbf{1} + \tilde{\mu} \frac{\mathbb{P}}{n}, \quad \text{with} \quad - \frac{1}{2} \leq \tilde{\mu} \leq 1,$$ being a free parameter. 
We see that for $\tilde{\mu}=0$ we regain the BGK model because then $\mathcal{T}^{-1}= \frac{1}{T} \textbf{1}$ and $\det (2 \pi \frac{T}{m} \textbf{1})= (2 \pi \frac{T}{m})^3$ in three space dimensions.

Since we wrote $\mathcal{T}^{-1}$ we have to ensure that $\mathcal{T}$ is invertible. This is done by the next lemma and the next theorem.
\begin{lemma}
Assume that $f>0$. Then $\frac{{\mathbb{P}}}{n }$  has strictly positive eigenvalues.
\end{lemma}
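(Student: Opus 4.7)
The plan is to recall that the pressure tensor is defined (consistently with the notation introduced around \eqref{macrosquone}) by
\[
\mathbb{P} = \int f(v)\, m\, (v-u) \otimes (v-u)\, dv,
\]
so that $\mathbb{P}/n$ is a symmetric $3 \times 3$ matrix. To prove that its eigenvalues are strictly positive, I would show that $\mathbb{P}/n$ is positive definite and then invoke the spectral theorem: a real symmetric positive definite matrix has strictly positive eigenvalues.

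To establish positive definiteness, I would fix an arbitrary non-zero vector $x \in \mathbb{R}^3$ and compute the quadratic form
\[
x^T \mathbb{P}\, x = \int f(v)\, m \, \bigl(x \cdot (v-u)\bigr)^2\, dv.
\]
The integrand is non-negative (since $f > 0$ and the other factor is a square), so $x^T \mathbb{P} x \geq 0$. For strict positivity I would observe that the integrand vanishes only on the affine hyperplane $\{v \in \mathbb{R}^3 : x \cdot (v-u) = 0\}$, which has Lebesgue measure zero. Since $f(v) > 0$ pointwise, the integrand is strictly positive on a set of positive measure, so $x^T \mathbb{P} x > 0$. Dividing by $n = \int f\, dv > 0$ (again using $f > 0$) gives $x^T (\mathbb{P}/n) x > 0$ for every non-zero $x$.

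The step that deserves the most care is the non-degeneracy argument that turns the non-strict inequality into a strict one; this is where the hypothesis $f > 0$ (rather than merely $f \geq 0$) is genuinely used. Once positive definiteness is in hand, the conclusion on eigenvalues is immediate, so I do not expect any serious obstacle — the lemma is essentially a direct unpacking of the definition of $\mathbb{P}$, and the proof should take only a few lines.
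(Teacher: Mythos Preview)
Your argument is correct and is exactly the standard route: symmetry of $\mathbb{P}/n$ is immediate from the definition, and positivity of the quadratic form $x^T\mathbb{P}\,x=\int f(v)\,m\,(x\cdot(v-u))^2\,dv$ follows from $f>0$ together with the fact that the set $\{v:x\cdot(v-u)=0\}$ has Lebesgue measure zero. Note, however, that the paper does not actually supply a proof of this lemma in the text; it simply refers the reader to \cite{Pirner2} for the two-species version, so there is no in-paper argument to compare against --- your short computation is precisely what one would expect that reference to contain.
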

The proof  is given in \cite{Pirner2} in the case of two species.
\begin{theorem}
Assume that $f>0$ and $ - \frac{1}{2} \leq \tilde{\mu} \leq 1.$ Then $\mathcal{T}$ has strictly positive eigenvalues. Especially $\mathcal{T}$ is invertible.
\end{theorem}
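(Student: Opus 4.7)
The plan is to diagonalize the problem by exploiting that $T\mathbf{1}$ is a scalar matrix, and then to reduce the invertibility statement to a one-parameter positivity argument in each eigendirection of the pressure tensor.

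First I would observe that $T\mathbf{1}$ commutes with $\mathbb{P}/n$ and has eigenvalue $T$ along every vector, so any eigenvector $e_i$ of $\mathbb{P}/n$ with eigenvalue $\lambda_i$ is automatically an eigenvector of $\mathcal{T}$ with eigenvalue
\begin{equation*}
\mu_i := (1-\tilde{\mu})\,T + \tilde{\mu}\,\lambda_i.
\end{equation*}
By the preceding lemma (applied to $f>0$) each $\lambda_i$ is strictly positive. The whole task therefore reduces to showing $\mu_i>0$ for $i=1,2,3$ under the assumption $-\tfrac12 \le \tilde{\mu} \le 1$.

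Next I would extract the key constraint linking the $\lambda_i$ to $T$: taking the trace of $\mathbb{P}$ in the definition \eqref{macrosquone} gives $\mathrm{tr}(\mathbb{P}) = \int m|v-u|^2 f\, dv = 3nT$, so $\lambda_1+\lambda_2+\lambda_3 = 3T$. Combined with $\lambda_j>0$ this yields the strict two-sided bound $0<\lambda_i<3T$ for each $i$. Now I would split into two cases according to the sign of $\tilde{\mu}$. For $0\le\tilde{\mu}\le 1$, $\mu_i$ is a convex combination of the positive numbers $T$ and $\lambda_i$, hence strictly positive. For $-\tfrac12\le\tilde{\mu}<0$, I write $\mu_i = T + \tilde{\mu}(\lambda_i - T)$; since $\tilde{\mu}<0$ and $\lambda_i-T<2T$ strictly, one has $\tilde{\mu}(\lambda_i-T)>2\tilde{\mu}T$, hence $\mu_i > T(1+2\tilde{\mu}) \ge 0$, with strictness coming from the strict upper bound on $\lambda_i$.

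Finally, since all eigenvalues of the symmetric matrix $\mathcal{T}$ are strictly positive, $\mathcal{T}$ is positive definite and in particular invertible, which gives the theorem. I do not anticipate a real obstacle here: the only mildly delicate point is handling the boundary case $\tilde{\mu}=-\tfrac12$, and this is precisely where the strict positivity of the other two eigenvalues of $\mathbb{P}/n$ (granted by the lemma) is used to turn the weak bound $\lambda_i\le 3T$ into the strict one needed for $\mu_i>0$.
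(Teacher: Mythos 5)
Your argument is correct: diagonalizing in the eigenbasis of $\mathbb{P}/n$, using the trace identity $\lambda_1+\lambda_2+\lambda_3=3T$ to get the strict bound $0<\lambda_i<3T$, and then treating $\tilde{\mu}\ge 0$ as a convex combination and $\tilde{\mu}<0$ via $\mu_i>(1+2\tilde{\mu})T$ is exactly the standard proof, and it is the one in Andries--Perthame to which the paper defers (the paper itself only cites the reference and gives no in-text argument). Nothing is missing; the handling of the endpoint $\tilde{\mu}=-\tfrac12$ through the strictness of $\lambda_i<3T$ is the right touch.
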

The proof is given by Andries and Perthame in \cite{AndriesPerthame2001}. 
\subsection{The theory of persistence of the velocity}
\label{sec7.1.3}
In  section \ref{sec7.4} we want to propose possible extensions to an ES-BGK model for gas mixtures. One of these extensions is based on the following physical theory. It is called  theory of the persistence of velocities. It is described in \cite{Jeans,Jeans2,Heer,Chapman}. The theory of persistence of velocities after a collision of two particles is a physical phenomenon developed by Jeans in \cite{Jeans}. It states the following. After a collision with another particle the velocity of a given particle will, on the average, still retain a component in the direction of its original motion. This is explained in the following. Jeans computes a mean absolute value   $\bar{c}'_1(c_1, c_2)$ of the velocity after the collision of a particle with mass $m_1$ which has the absolute value of $c_1$ of its velocity before collision via
\begin{align}
 \bar{c}_1'(c_1,c_2)= \frac{ \int_{S^2} c_1' \tilde{\nu}_{12}(c_2, \omega) d\omega}{\int_{S^2} \tilde{\nu}_{12} (c_2, \omega) d\omega },
 \label{velbarc1prime}
 \end{align}
 The index $2$ denotes another particle with mass $m_2$ and a fixed but arbitrary value of the absolute value $c_2$ of the velocity of particle $2$. The average is taken over all possible deflection angles $\omega \in S^2$ and
 $\tilde{\nu}_{12}(c_2, \omega)$ denotes the probability of a collision. 
\subsubsection{The case of equal masses}
In the case of hard balls and equal masses one can compute the integral in \eqref{velbarc1prime}. For details of this computation see \cite{Jeans,Jeans2,Chapman}.
We obtain 
$$\bar{c}_1'(c_1,c_2) = \begin{cases} \frac{15 c_1^4 + c_2^4}{10 c_1 ( 3 c_1^2 + c_2^2)} \quad c_1>c_2, \\ \frac{c_1(5 c_2^2 + 3 c_1^2) }{5 ( 3 c_2^2 + c_1^2)} \quad c_1<c_2. \end{cases}$$ Then the expression $\frac{\bar{c}_1'(c_1,c_2)}{c_1}$ is called the measure of persistence of the velocity of the first particle. We observe that $\frac{\bar{c}_1'(c_1,c_2)}{c_1}$  depends only on the ratio $\kappa:= \frac{c_1}{c_2}$, namely $$ \frac{\bar{c}_1'}{c_1}(\kappa)= \begin{cases} \frac{15 \kappa^4 + 1}{10 \kappa^2 ( 3 \kappa^2 + 1)} \quad \kappa>1, \\ \frac{3 \kappa^2 + 5 }{5 ( \kappa^2 + 3)} \quad \quad  \kappa <1. \end{cases}$$
We observe the following estimate
\begin{lemma}
$\frac{\bar{c}_1'}{c_1}$ satisfies the following estimate $$\frac{1}{4} \leq \frac{\bar{c}_1'}{c_1}(\kappa) \leq 1,$$ 
for all $\kappa \in \mathbb{R}^+$.
\label{perstistest}
\end{lemma}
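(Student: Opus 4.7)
The plan is to treat the two branches of $\frac{\bar{c}_1'}{c_1}$ separately and show that each is monotonic in $\kappa$ on its respective interval, so the extremes are attained at the endpoints. Since $\bar{c}_1'/c_1$ is given by a piecewise rational function, the calculation reduces to elementary calculus, and the desired interval $[\tfrac14,1]$ is much larger than the actual range.

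First I would consider the branch $g(\kappa) = \frac{3\kappa^2+5}{5(\kappa^2+3)}$ on $(0,1)$. A direct computation gives
\begin{equation*}
g'(\kappa) = \frac{6\kappa\cdot 5(\kappa^2+3) - (3\kappa^2+5)\cdot 10\kappa}{25(\kappa^2+3)^2} = \frac{40\kappa}{25(\kappa^2+3)^2} > 0,
\end{equation*}
so $g$ is strictly increasing on $(0,1)$. Its infimum at $\kappa\to 0^+$ equals $\frac{5}{15} = \frac{1}{3}$, and its supremum at $\kappa\to 1^-$ equals $\frac{8}{20} = \frac{2}{5}$. Hence on this branch $g(\kappa) \in [\tfrac13,\tfrac25] \subset [\tfrac14,1]$.

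Next I would analyze $h(\kappa) = \frac{15\kappa^4+1}{10\kappa^2(3\kappa^2+1)}$ on $(1,\infty)$. Setting $u = \kappa^2$ turns this into $\tilde h(u)=\frac{15u^2+1}{30u^2+10u}$, and a short differentiation gives a numerator (for $\tilde h'$) proportional to $150u^2-60u-10$, which at $u=1$ equals $80>0$ and is strictly increasing for $u\geq \tfrac15$, hence positive for all $u\geq 1$. Therefore $h$ is strictly increasing on $[1,\infty)$, with $h(1) = \frac{16}{40}=\frac{2}{5}$ and $\lim_{\kappa\to\infty} h(\kappa) = \frac{15}{30} = \frac{1}{2}$. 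Combined with the first branch, $\bar{c}_1'/c_1$ takes values in $[\tfrac13,\tfrac12]$, which is strictly contained in $[\tfrac14,1]$, yielding the claim.

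There is no real obstacle here; the only mildly delicate step is checking the sign of the derivative of $h$, since its numerator is a cubic in $u$ whose smaller root lies just below $u=1$. Substituting $u=\kappa^2$ from the outset avoids writing out a quartic derivative and keeps the monotonicity argument transparent. One could alternatively prove the (weak) bounds $\tfrac14\leq \bar{c}_1'/c_1$ and $\bar{c}_1'/c_1\leq 1$ directly by cross-multiplying and reducing to non-negativity of simple polynomials in $\kappa$, but the monotonicity argument has the added benefit of identifying the sharp bounds $\tfrac13$ and $\tfrac12$.
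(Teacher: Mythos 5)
Your proof is correct, and it takes a different route from the paper. The paper proves the bounds by crude term-by-term estimation of numerator and denominator on each branch: for $\kappa>1$ it bounds $\frac{15\kappa^4+1}{10\kappa^2(3\kappa^2+1)}$ above by $\frac{1}{2}+\frac{1}{30}$ and below by $\frac{3}{8}$ (replacing $3\kappa^2+1$ by $3\kappa^4$ resp.\ $4\kappa^2$-type bounds), and for $\kappa<1$ it gets $\frac{3\kappa^2+5}{5(\kappa^2+3)}\le\frac{3\kappa^2+5}{15}\le 1$ and $\ge\frac{5}{20}=\frac14$. You instead show each branch is monotone increasing (via the sign of $g'$ and, after the substitution $u=\kappa^2$, the sign of $150u^2-60u-10$ on $[1,\infty)$), which is slightly more work but identifies the sharp range: the two branches match at $\kappa=1$ with value $\frac25$, and the function takes values only in $(\frac13,\frac12)$. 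Your derivative computations check out ($g'=\frac{40\kappa}{25(\kappa^2+3)^2}$; the quadratic $150u^2-60u-10$ equals $80$ at $u=1$ and is increasing for $u\ge\frac15$, hence positive on $[1,\infty)$). Two cosmetic slips: the numerator of $\tilde h'$ is a quadratic in $u$, not a cubic as you say in your closing remark (its positive root is near $u\approx 0.53$, not ``just below'' $1$), and the endpoint values $\frac13$ and $\frac12$ are infima/suprema rather than attained values, so the range is open at both ends; neither affects the conclusion.
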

%
%
%
%
\begin{proof}
First, let $\kappa >1$. Then $\frac{\bar{c}_1'}{c_1}(\kappa)$ is bounded from above by a number smaller $1$ since
$$ \frac{15 \kappa^4 + 1}{10 \kappa^2 ( 3 \kappa^2 + 1)} \leq \frac{15 \kappa^4 + 1}{30 \kappa^4} = \frac{1}{2} + \frac{1}{30} \frac{1}{\kappa^4} \leq \frac{1}{2} + \frac{1}{30},$$
and from below by $\frac{1}{3}$ since
$$ \frac{15 \kappa^4 + 1}{10 \kappa^2 ( 3 \kappa^2 + 1)} \geq \frac{15 \kappa^4 + 1}{10 \kappa^2 ( 3 \kappa^2 + \kappa^2)} = \frac{15 \kappa^4}{40 \kappa^4} = \frac{3}{8} \geq \frac{1}{4}.$$
Next, let $\kappa <1$, then we have
$$ \frac{3 \kappa^2 + 5}{5(\kappa^2 +3)} \leq \frac{3 \kappa^2 +5}{15} \leq 1,$$
and $$ \frac{3 \kappa^2 + 5}{5( \kappa^2 +3)} \geq \frac{5}{20} = \frac{1}{4}.$$
\end{proof}
We observe that for any $\kappa$ we expect a positive persistence of the velocity before collision which is always larger than $\frac{1}{4}$ for any $\kappa$.
\begin{remark}
In the case of different masses we obtain from the integral in \eqref{velbarc1prime} the expression
$$ \frac{\bar{c}_1'}{c_1} = \frac{m_1-m_2}{m_1+m_2} + \frac{2 m_2}{m_1+m_2} \left(\frac{\bar{c}_1'}{c_1}\right)_e,$$
where $(\frac{\bar{c}_1'}{c_1})_e$ denotes the persistence when the masses are equal. So we obtain the following inequality
$$ \frac{\bar{c}_1'}{c_1} \geq \frac{m_1 - \frac{1}{2} m_2}{m_1 +m_2},$$
if we use lemma \ref{perstistest} for the persistence when the masses are equal. So in the case of different masses we observe that it is dependent from the masses whether we have a persistence or not.
\end{remark}
\subsubsection{Consequences for the choice of the ES-BGK operator}
The choice of the tensor $\mathcal{T}= (1- \tilde{\mu}) T \textbf{1} + \tilde{\mu} \mathbb{P}$ can be motivated with the theory of persistence of the velocities as follows. This was done by Holway in \cite{Holway} who was the physicist who invented the ES-BGK model. The theory of persistence of the velocity argues that in the post-collisional absolute value of the velocity there is a memory of the pre-collisional absolute value of the velocity of the particle. In the single species BGK equation this yields to the choice of
$$\mathcal{T} = (1- \tilde{\mu}) T \textbf{1} + \tilde{\mu} \mathbb{P}, \quad -\frac{1}{2} \leq \tilde{\mu} \leq 1,$$
the tensor chosen in the well-known ES-BGK model, where $\tilde{\mu} \mathbb{P}$ preserves the memory of the off-equilibrium content of the pre-collisional velocity. This can be rewritten as
$$ \mathcal{T} = T \textbf{1} + \tilde{\mu}  ~ \text{traceless}[\mathbb{P}], $$
where $\text{traceless}[\mathbb{P}]$ denotes the traceless part of $\mathbb{P}$. So the off-equilibrium part is contained in $\tilde{\mu} ~ \text{traceless}[\mathbb{P}].$ 
\section{The BGK approximation for gas mixtures}
\label{sec7.3}
For simplicity in the following we consider a mixture composed of two different species, but the discussion can be generalized to a multi species mixtures. Thus, our kinetic model has two distribution functions $f_1(x,v,t)> 0$ and $f_2(x,v,t) > 0$ where $x\in \Lambda \subset \mathbb{R}^3$ and $v\in \mathbb{R}^3$ are the phase space variables and $t\geq 0$ the time.  

 Furthermore, for any $f_1,f_2: \Lambda \subset \mathbb{R}^3 \times \mathbb{R}^3 \times \mathbb{R}^+_0 \rightarrow \mathbb{R}$ with $(1+|v|^2)f_1,(1+|v|^2)f_2 \in L^1(\mathbb{R}^3), f_1,f_2 \geq 0$ we relate the distribution functions to  macroscopic quantities by mean-values of $f_k$, $k=1,2$
\begin{align}
\int f_k(v) \begin{pmatrix}
1 \\ v  \\ m_k |v-u_k|^2 \\ m_k (v-u_k(x,t)) \otimes (v-u_k(x,t)) \end{pmatrix} 
dv =: \begin{pmatrix}
n_k \\ n_k u_k \\  3 n_k T_k \\ \mathbb{P}_k
\end{pmatrix} , \quad k=1,2,
\label{moments5}
\end{align} 
where $n_k$ is the number density, $u_k$ the mean velocity, $T_k$ the mean temperature and $\mathbb{P}_k$ the pressure tensor of species $k$, $k=1,2$. Note that in this section we shall write $T_k$ instead of $k_B T_k$, where $k_B$ is Boltzmann's constant.

We are interested in a BGK approximation of the interaction terms. This leads us to define equilibrium distributions not only for each species itself but also for the two interspecies equilibrium distributions. We choose the collision terms   as BGK operators. 
Then the model can be written as:

\begin{align} \begin{split} \label{BGK5}
\partial_t f_1 + v \cdot \nabla_x  f_1   &= \nu_{11} n_1 (M_1 - f_1) + \nu_{12} n_2 (M_{12}- f_1),
\\ 
\partial_t f_2 + v \cdot \nabla_x  f_2 &=\nu_{22} n_2 (M_2 - f_2) + \nu_{21} n_1 (M_{21}- f_2), 
\end{split}
\end{align}
with the Maxwell distributions
\begin{align} 
\begin{split}
M_k(x,v,t) &= \frac{n_k}{\sqrt{2 \pi \frac{T_k}{m_k}}^3 }  \exp \left({- \frac{|v-u_k|^2}{2 \frac{T_k}{m_k}}} \right),
\quad k=1,2,
\\
M_{kj}(x,v,t) &= \frac{n_{kj}}{\sqrt{2 \pi \frac{T_{kj}}{m_k}}^3 }  \exp \left({- \frac{|v-u_{kj}|^2}{2 \frac{T_{kj}}{m_k}}} \right), \quad k,j=1,2,~ k \neq j,
\end{split}
\label{BGKmix5}
\end{align}
where $\nu_{11} n_1$ and $\nu_{22} n_2$ are the collision frequencies of the particles of each species with itself, while $\nu_{12}$ and $\nu_{21}$ are related to interspecies collisions.
To be flexible in choosing the relationship between the collision frequencies, we now assume the relationship
\begin{align} 
\begin{split}
\nu_{12}&= \varepsilon \nu_{21}, \hspace{3.1cm} 0 < \varepsilon \leq 1,
 \\
\nu_{11} &= \beta_1 \nu_{12}, \quad \nu_{22} = \beta_2 \nu_{21}, \quad \quad \beta_1, \beta_2 >0.
\end{split}
\label{coll5}
\end{align}

%
%
%
%
%
%
If we assume that \begin{align} n_{12}=n_1 \quad \text{and} \quad n_{21}=n_2,  
\label{density5} 
\end{align}
 \begin{align}
u_{12}= \delta u_1 + (1- \delta) u_2, \quad \delta \in \mathbb{R},
\label{convexvel5}
\end{align} 
and
\begin{align}
\begin{split}
T_{12} &=  \alpha T_1 + ( 1 - \alpha) T_2 + \gamma |u_1 - u_2 | ^2,  \quad 0 \leq \alpha \leq 1, \gamma \geq 0 ,
\label{contemp5}
\end{split}
\end{align}
we have conservation of the number of particles, of total momentum and total energy provided that
\begin{align}
u_{21}=u_2 - \frac{m_1}{m_2} \varepsilon (1- \delta ) (u_2 - u_1),
\label{veloc5}
\end{align}
and
\begin{align}
\begin{split}
T_{21} =\left[ \frac{1}{3} \varepsilon m_1 (1- \delta) \left( \frac{m_1}{m_2} \varepsilon ( \delta - 1) + \delta +1 \right) - \varepsilon \gamma \right] |u_1 - u_2|^2 \\+ \varepsilon ( 1 - \alpha ) T_1 + ( 1- \varepsilon ( 1 - \alpha)) T_2,
\label{temp5}
\end{split}
\end{align}
 see theorem 2.1, theorem 2.2 and theorem 2.3 in \cite{Pirner}.

 We see that without using an ES-BGK extension, we already have three free parameters in \eqref{convexvel5} and \eqref{contemp5} in order to possibly match coefficients like the Fick's constant or the heat conductivity in the Navier-Stokes equations.  
 In order to ensure the positivity of all temperatures, we need to impose restrictions on $\delta$ and $\gamma$, 
 \begin{align}
0 \leq \gamma  \leq \frac{m_1}{3} (1-\delta) \left[(1 + \frac{m_1}{m_2} \varepsilon ) \delta + 1 - \frac{m_1}{m_2} \varepsilon \right],
 \label{gamma5}
 \end{align}
and
\begin{align}
 \frac{ \frac{m_1}{m_2}\varepsilon - 1}{1+\frac{m_1}{m_2}\varepsilon} \leq  \delta \leq 1,
\label{gammapos5}
\end{align}
see theorem 2.5 in \cite{Pirner}.
  This summarizes our kinetic model \eqref{BGK5} in of two species that contains three free parameters. More details can be found in \cite{Pirner}.
\section{Coefficients on the Navier Stokes level}
\label{sec7.3.6.1}
 In the two species case we expect from the H-Theorem (see theorem 2.7 in \cite{Pirner}) that in equilibrium the two species have a common velocity  and a common temperature. In the following, we will denote this two quantities by $\bar{u}$ and $\bar{T}$.  In this section we want to present what parameters in the Navier-Stokes equations for mixtures we want to fix in the two species case. This is described by Groppi, Monica and Spiga in \cite{Groppi}. They expect the following expansion of the velocities according to Fick's law
\begin{align*}
u_s^{\varepsilon} = \bar{u} - \sum_{r=1}^2 D_{sr} \nabla_x n_r, \quad s=1,2,
\end{align*} 
with the four diffusion coefficients bound together by the three independent constraints
\begin{align*}
D_{12}=D_{21}, \quad \sum_{s=1}^2 D_{sr} m_s n_s, \quad r=1,2,
\end{align*}
so the full Fick matrix is determined by only one of its entries. 
In case of the pressure tensor they expect the following expansion
\begin{align*}
\mathbb{P} := \mathbb{P}_1 + \mathbb{P}_2 = (n_1 + n_2) \bar{T} \textbf{1} - \mu (\nabla_x \bar{u} + \nabla_x \bar{u}^T - \frac{2}{3} \nabla_x \cdot \bar{u} \textbf{1}),
\end{align*}
where $\mu$ denotes the viscosity coefficient of the gas mixture. In the case of the derivation of the heat flux, they expect the following
\begin{align*}
\tilde{Q} := \tilde{Q}_1 + \tilde{Q}_2 = - \lambda (m_1 + m_2) \sum_{s=1}^2 \frac{n_s}{m_s} \nabla_x \bar{T} + C \sum_{s=1}^2 n_s (u_s - \bar{u}),
\end{align*}
where $\tilde{Q}_k$ is $\tilde{Q}_k= m_k \int (v-u_k) |v-u_k|^2 f_k dv$, $k=1,2$ and $\lambda$ denotes the heat conductivity and $C$ the thermal diffusion parameter. 
\section{Chapman Enskog expansion for the mixture}
\label{sec7.5}
In this section  perform the Chapman Enskog expansion for the BGK equation for mixtures \eqref{BGK5} in order to see where the free parameters $\alpha, \delta$ and $\gamma$ will show up at the Navier-Stokes level. For simplicity, we do not do all the computations but compute it up to the point where our free parameters appear as was done in the case of $\tilde{\mu}$ in the one species case.
\subsection{Dimensionless form}
We start with the non-dimensionless form of the BGK model \eqref{BGK5}. We state the result for the dimensionless version performed in \cite{Pirner4}. 
One obtains  
\begin{align}
\begin{split}
\partial_t f_1 + v \cdot \nabla_x  f_1 = \frac{1}{\varepsilon_1}  (M_1 - f_1 ) + \frac{1}{\tilde{\varepsilon}_1} ( M_{12} - f_1),
\\
\partial_t f_2 + v \cdot \nabla_x  f_2 = \frac{1}{\varepsilon_2} (M_2 - f_2 ) + \frac{1}{\tilde{\varepsilon}_2} ( M_{21} - f_2),
\end{split}
\label{BGK6}
\end{align}
where 
$$ \frac{1}{\varepsilon_1} = \beta_1 \bar{\nu}_{12} \bar{t} \frac{N}{\bar{x}}$$
$$ \frac{1}{\tilde{\varepsilon}_1}= \frac{1}{\varepsilon_1} \frac{1}{\beta_1} \frac{n_2}{n_1}, $$
$$ \frac{1}{\tilde{\varepsilon}_2}= \frac{1}{\varepsilon_1} \frac{1}{\beta_1} \frac{1}{\varepsilon},  $$
$$ \frac{1}{\varepsilon_2}= \frac{1}{\varepsilon_1} \frac{\beta_2} {\beta_1 \varepsilon} \frac{n_2}{n_1}, $$
if we assume \eqref{coll5}. The parameter $\bar{\nu}_{12}$ denotes a typical value for the collision frequency, $\bar{t}$ a typical time scale, $\bar{x}$ a typical length scale and $N$ the typical number of particles in $\bar{x}^3$.
For simplicity we choose $\nu_{12}=1$ and consider the case where $\varepsilon_1$ is a small parameter. This means both type of interactions, interactions of a species with itself and interactions with the other species, become dominant and we expect to get the global equilibrium with equal mean velocities and temperatures in the limit $\varepsilon_1 \rightarrow 0$.
The non-dimensionalized Maxwell distributions are given by 
\begin{align} 
\begin{split}
M_1(x,v,t) &= \frac{n_1}{\sqrt{2 \pi T_1}^3} \exp \left({- \frac{|v-u_1|^2}{2 T_1}} \right),
\\
M_2(x,v,t) &= \frac{n_2}{\sqrt{2 \pi T_2}^3} \left( \frac{m_2}{m_1} \right) ^{\frac{1}{2}} \exp \left({- \frac{|v-u_2|^2}{2 T_2}} \frac{m_2}{m_1} \right),
\\
M_{12}(x,v,t) &= \frac{n_{1}}{\sqrt{2 \pi T_{12}}^3} \exp \left({- \frac{|v-u_{12}|^2}{2 T_{12}}}\right),
\\
M_{21}(x,v,t) &= \frac{n_{2}}{\sqrt{2 \pi T_{21}}^3} \left( \frac{m_2}{m_1} \right) ^{\frac{1}{2}} \exp \left({- \frac{|v-u_{21}|^2}{2 T_{21}}} \frac{m_2}{m_1} \right),
\end{split}
\label{Max_dimChap}
\end{align}
with the non-dimensionalized macroscopic quantities 
\begin{align}
u_{12}&=  \delta u_1 +(1- \delta) u_2, \label{convexveldim}   \\
T_{12} &= \alpha T_1 +(1-\alpha) T_2+  \frac{\gamma}{m_1} |u_1 - u_2|^2, \label{contempdim}\\
u_{21}&= ( 1- \frac{m_1}{m_2}\varepsilon(1-\delta)) u_2 + \frac{m_1}{m_2} \varepsilon (1- \delta) u_1, \label{velocdim} \\
\begin{split}
T_{21} &= [(1- \varepsilon(1- \alpha)) T_2 + \varepsilon (1- \alpha) T_1 ]\\& + \left( \frac{1}{3}\varepsilon  (1- \delta) ( \frac{m_1}{m_2} \varepsilon (\delta -1) + \delta +1) - \varepsilon \frac{\gamma}{m_1}\right)|u_1 -u_2|^2, \label{tempdim}
\end{split}
\end{align}
The macroscopic quantities in non-dimensionalized form are given by
\begin{align}
\begin{split}
&\int f_k dv = n_k,  \\ &\int v f_k dv = n_k u_k, \quad k=1,2, \\ \frac{1}{3} \frac{1}{n_1} &\int |v-u_1|^2 f_1 dv = T_1, \\ \frac{1}{3} \frac{m_2}{m_1} \frac{1}{n_2} &\int |v-u_2|^2 f_2 dv = T_2.
\end{split}
\label{momentsdim}
\end{align}
\subsection{Expansion}
Now, we want to do the analogous Chapman Enskog expansion  in the two species case. 
We expand both $f_1$ and $f_2$ in terms of $\varepsilon_1$
\begin{align*}
f_1 = f_1^0 + \varepsilon_1 f_1^1 + \varepsilon_1^2 f_1^2 + \cdots, \\
f_2 = f_2^0 + \varepsilon_1 f_2^1 + \varepsilon_1^2 f_2^2 + \cdots.
\end{align*}
From \eqref{BGK6} we get in the limit $\varepsilon_1 \rightarrow 0$ 
\begin{align*}
f_1 = \frac{1}{1+ \frac{1}{\beta_1}} (M_1 + \frac{1}{\beta_1} M_{12} ), \\
f_2 = \frac{1}{1+ \frac{1}{\beta_2}} (M_2 + \frac{1}{\beta_2} M_{21} ),
\end{align*}
from which we can deduce that both distribution functions are Maxwell distributions with equal mean velocity and temperature (see theorem 2.8 in \cite{Pirner}). \\
From \eqref{BGK6} we get
\begin{align}
\begin{split}
f_1 &= \frac{1}{\frac{1}{\varepsilon_1}+ \frac{1}{\tilde{\varepsilon}_1}}(\frac{1}{\varepsilon_1}M_1 + \frac{1}{\tilde{\varepsilon}_1} M_{12}) - \frac{1}{\frac{1}{\varepsilon_1}+ \frac{1}{\tilde{\varepsilon}_1}} (\partial_t f_1 + v \cdot \nabla_x  f_1 ), \\
f_2 &= \frac{1}{\frac{1}{\varepsilon_2}+ \frac{1}{\tilde{\varepsilon}_2}}(\frac{1}{\varepsilon_2}M_2 + \frac{1}{\tilde{\varepsilon}_2} M_{21}) - \frac{1}{\frac{1}{\varepsilon_2}+ \frac{1}{\tilde{\varepsilon}_2}} (\partial_t f_2 + v \cdot \nabla_x  f_2 ).
\end{split}
\label{BGK_um}
\end{align}
The zeroth order terms of the expansion have the same number density as the distribution functions itself so the number densities are independent of $\varepsilon_1$. But the velocities and temperatures do not coincide, since we expect a common value in equilibrium, so they depend on $\varepsilon_1$. This means the first term in the expansion $\frac{1}{\frac{1}{\varepsilon_1}+ \frac{1}{\tilde{\varepsilon}_1}}(\frac{1}{\varepsilon_1}M_1 + \frac{1}{\tilde{\varepsilon}_1} M_{12}) $ is not the zeroth order in $\varepsilon_1$ but also contains higher orders. In the one species case one directly obtains that the zeroth order is given by $M(f)$. This helps a lot in the expansion since is  possible to insert $M(f)$ as zeroth order in the expansion of the distribution function. Since we are not able to explicitly specify the zeroth order here, we are not able to do this. But we can find a linear combination of $f_1$ and $f_2$ such that the mean velocity and the temperature coincide with the zeroth order of $\frac{1}{\frac{1}{\varepsilon_1}+ \frac{1}{\tilde{\varepsilon}_1}}(\frac{1}{\varepsilon_1}M_1 + \frac{1}{\tilde{\varepsilon}_1} M_{12})$ which remembers on  the one species case. This is done in the next section.
\subsection{Combination of the distribution functions whose macroscopic quantities correspond to the macroscopic quantities of the zeroth order }
\label{sec7.5.3}
We now want to find a linear combination of the distribution functions whose mean velocity and temperature are independent of $\varepsilon_1$: \\
The zeroth order of $f_1$ is contained in
\begin{align*}
\frac{1}{\frac{1}{\varepsilon_1}+ \frac{1}{\tilde{\varepsilon}_1}}(\frac{1}{\varepsilon_1}M_1 + \frac{1}{\tilde{\varepsilon}_1} M_{12}),
\end{align*} and the zeroth order of $f_2$ is contained in 
\begin{align*}
\frac{1}{\frac{1}{\varepsilon_2}+ \frac{1}{\tilde{\varepsilon}_2}}(\frac{1}{\varepsilon_2}M_2 + \frac{1}{\tilde{\varepsilon}_2} M_{21}).
\end{align*}
So a combination of the distribution functions whose mean velocity and temperature are of zeroth order is obtained if the mean velocity and the temperature of
$
A f_1 + B f_2 $ are equal to the mean velocity and temperature of $\frac{A}{\frac{1}{\varepsilon_1}+ \frac{1}{\tilde{\varepsilon}_1}}(\frac{1}{\varepsilon_1}M_1 + \frac{1}{\tilde{\varepsilon}_1} M_{12}) + \frac{B}{\frac{1}{\varepsilon_2}+ \frac{1}{\tilde{\varepsilon}_2}}(\frac{1}{\varepsilon_2}M_2 + \frac{1}{\tilde{\varepsilon}_2} M_{21})$.
By  taking moments we get conditions on $A$ and $B$. From the velocities we get
{\small
\begin{align*}
&A n_1 u_1 + B n_2 u_2 \\ &= \frac{A}{\frac{1}{\varepsilon_1}+ \frac{1}{\tilde{\varepsilon}_1}} \left(\frac{1}{\varepsilon_1}n_1 u_1 + \frac{1}{\tilde{\varepsilon}_1} n_1 u_{12} \right) + \frac{B}{\frac{1}{\varepsilon_2}+ \frac{1}{\tilde{\varepsilon}_2}} \left(\frac{1}{\varepsilon_2} n_2 u_2 + \frac{1}{\tilde{\varepsilon}_2} n_2 u_{21} \right)\\&= \frac{A}{\frac{1}{\varepsilon_1}+ \frac{1}{\tilde{\varepsilon}_1}} \left(\frac{1}{\varepsilon_1}n_1 u_1 + \frac{1}{\tilde{\varepsilon}_1} n_1 \delta u_1 +\frac{1}{\tilde{\varepsilon}_1} n_1 (1-\delta) u_2 \right) \\&+ \frac{B}{\frac{1}{\varepsilon_2}+ \frac{1}{\tilde{\varepsilon}_2}} \left(\frac{1}{\varepsilon_2} n_2 u_2 + \frac{1}{\tilde{\varepsilon}_2} n_2 u_2 -  \frac{1}{\tilde{\varepsilon}_2} n_2 \frac{m_1}{m_2} \varepsilon ( 1- \delta) (u_2 - u_1) \right) \\&=\left[ \frac{A}{\frac{1}{\varepsilon_1}+ \frac{1}{\tilde{\varepsilon}_1}} \left(\frac{1}{\varepsilon_1} + \frac{1}{\tilde{\varepsilon}_1} \delta \right) n_1 + \frac{B}{\frac{1}{\varepsilon_2}+ \frac{1}{\tilde{\varepsilon}_2}} \frac{1}{\tilde{\varepsilon}_2} n_2  \frac{m_1}{m_2} \varepsilon ( 1- \delta) \right]  u_1\\& + \left[ \frac{A}{\frac{1}{\varepsilon_1}+ \frac{1}{\tilde{\varepsilon}_1}} \left(\frac{1}{\tilde{\varepsilon}_1}(1- \delta) n_1 + \frac{B}{\frac{1}{\varepsilon_2}+ \frac{1}{\tilde{\varepsilon}_2}} \left( \left( \frac{1}{\varepsilon_2} +\frac{1}{\tilde{\varepsilon}_2} \right) n_2 - \frac{1}{\tilde{\varepsilon}_2}  n_2\right) \frac{m_1}{m_2} \varepsilon ( 1- \delta)\right) \right]  u_2.
\end{align*}}
A comparison of the coefficient in front of $u_1$ and $u_2$ leads to
\begin{align*}
A= B \frac{\tilde{\varepsilon}_1}{\tilde{\varepsilon}_2} \frac{m_1}{m_2} \varepsilon \frac{n_2}{n_1} \frac{\frac{1}{\varepsilon_1}+ \frac{1}{\tilde{\varepsilon}_1}}{\frac{1}{\varepsilon_2}+ \frac{1}{\tilde{\varepsilon}_2}}.
\end{align*}
If we do the same with the temperatures, we will obtain the same result. \\
We can simplify this expression by using the relationships between $\varepsilon_1$, $\varepsilon_2$, $\tilde{\varepsilon}_1$ and $\tilde{\varepsilon}_2$. First we get
\begin{align*}
\frac{\tilde{\varepsilon}_1}{\tilde{\varepsilon}_2} = \frac{1}{\varepsilon^2} \frac{n_1}{n_2},
\end{align*} and secondly
\begin{align*}
\frac{\frac{1}{\varepsilon_1}+ \frac{1}{\tilde{\varepsilon}_1}}{\frac{1}{\varepsilon_2}+ \frac{1}{\tilde{\varepsilon}_2}} = \frac{\beta_1 \frac{n_1}{n_2} + 1}{\beta_2 \frac{n_2}{n_1} +1} \varepsilon^2.
\end{align*}
Multiplying the two expressions we get
\begin{align*}
\frac{\beta_1 \frac{n_1}{n_2} +1}{\beta_2 \frac{n_2}{n_1} +1} \frac{n_1}{n_2}.
\end{align*}
So all in all we get the following simplification
\begin{align*}
A= B \frac{m_1}{m_2} \varepsilon \frac{\beta_1 \frac{n_1}{n_2} + 1}{\beta_2 \frac{n_2}{n_1} + 1}.
\end{align*}
We choose $B= 1$ and $ A= \frac{m_1}{m_2} \varepsilon \frac{\beta_1 \frac{n_1}{n_2} + 1}{\beta_2 \frac{n_2}{n_1}+ 1}$ and obtain
\begin{align}
\frac{m_1}{m_2} \varepsilon \frac{\beta_1 \frac{n_1}{n_2}+ 1}{\beta_2 \frac{n_2}{n_1} + 1} f_1 +  f_2.
\label{combination}
\end{align}
So 
\begin{align}
\frac{m_1}{m_2} \varepsilon \frac{\beta_1 \frac{n_1}{n_2} + 1}{\beta_2 \frac{n_2}{n_1} + 1} f_1 +  f_2 = \bar{f}^0 + \varepsilon_1 \bar{f}^1 + \cdots.
\label{same_moments}
\end{align}
where $\bar{f}^0$ has the same density, velocity and temperature according to \eqref{momentsdim} as the left hand-side and moments of $\bar{f}^k$ $, k\geq 1$ are zero as in the one species case. We can explicitly compute the moments of $\bar{f}_0$. This is done in the next section.
\subsection{Moments of $\bar{f}^0$}
The density of $\bar{f}^0$ is given by
\begin{align*}
\bar{n}^0 = \int \bar{f}^0 dv = \int (\frac{m_1}{m_2} \varepsilon \frac{\beta_1 \frac{n_1}{n_2} + 1}{\beta_2 \frac{n_2}{n_1} + 1} f_1 +  f_2) dv = \frac{m_1}{m_2} \varepsilon \frac{\beta_1 \frac{n_1}{n_2} + 1}{\beta_2 \frac{n_2}{n_1}+ 1} n_1 + n_2.
\end{align*}
Therefore the velocity is given by
\begin{align*}
\bar{u}^0 = \frac{1}{\bar{n}^0} \int \bar{f}^0 v dv = \frac{\frac{m_1}{m_2} \varepsilon \frac{\beta_1 + 1}{\beta_2 \frac{n_2}{n_1} + 1} n_1 u_1 + n_2 u_2}{\frac{m_1}{m_2} \varepsilon \frac{\beta_1 \frac{n_1}{n_2}+ 1}{\beta_2 + 1} n_1 +  n_2 },
\end{align*}
and the energy is given by
\begin{align*}
\bar{n}^0 |\bar{u}^0|^2 + 3 \bar{n}^0 \frac{\bar{T}^0}{\bar{m}^0} = \int \bar{f}^0 |v|^2 dv \\= A n_1 |u_1|^2 +  n_2 |u_2|^2 + 3 n_1 A T_1 + 3 n_2  T_2 \frac{m_1}{m_2}.
\end{align*}
Solving this for $\frac{\bar{T}^0}{\bar{m}^0}$ leads to the temperature
\begin{align*}
\frac{\bar{T}^0}{\bar{m}^0} = \frac{1}{3} \frac{A n_1 n_2}{(An_1+  n_2)^2} |u_1-u_2|^2 + \frac{A n_1}{A n_1 +  n_2 } T_1 + \frac{ n_2}{A n_1 +  n_2} T_2 \frac{m_1}{m_2}.
\end{align*}

\subsection{Combining the distribution functions whose moments are zero}

We observe that in section \ref{sec7.5.3} we regained a property of the one species case. But since we are in the two species case it is not enough to have one equation for the sum of the two distribution functions. We need a second equation. 
Since we expect that in equilibrium the mean velocities and the temperatures of the two distribution functions are the same we know that the zeroth order of $\frac{n_2}{n_1} f_1 - f_2$ has zero mean velocity and the zeroth order of the combination $\frac{n_2}{n_1} \frac{m_1}{m_2} f_1 - f_2$ has zero temperature. \\
Therefore
\begin{align}
\frac{n_2}{n_1} f_1 -  f_2 = \tilde{f}^0 + \varepsilon_1 \tilde{f}^1 + \cdots,
\label{zero_moments} \\
\frac{n_2}{n_1}\frac{m_1}{m_2} f_1 -  f_2 = \tilde{\tilde{f}}^0 + \varepsilon_1 \tilde{\tilde{f}}^1 + \cdots,
\label{zero_temp}
\end{align}
where $\tilde{f}^0$ has zero mean velocity and $\tilde{\tilde{f}}^0$ has zero temperature. \\ 
Solving \eqref{same_moments} and \eqref{zero_moments} for $f_1$ and $f_2$ leads to
\begin{align}
\begin{split}
f_1 = \frac{1}{A+ \frac{n_2}{n_1}} \left(\bar{f}^0 + \tilde{f}^0 \right)&+ \varepsilon_1 \frac{1}{A+ \frac{n_2}{n_1}} \left( \bar{f}^1 + \tilde{f}^1 \right) \\& +\varepsilon_1^2 \frac{1}{A+\frac{n_2}{n_1}}\left(\bar{f}^2 + \tilde{f}^2 \right)  + O(\varepsilon_1^3),
\end{split}
\label{expans1}
\end{align}
\begin{align}
\begin{split}
f_2 = \frac{1}{\frac{n_2}{n_1} + A} ( \frac{n_2}{n_1} \bar{f}^0 -A \tilde{f	}^0 ) &+ \varepsilon_1 \frac{1}{\frac{n_2}{n_1} + A} \left(\frac{n_2}{n_1}  \bar{f}^1 - A \tilde{f}^1 \right) \\&+ \varepsilon^2_1 \frac{1}{\frac{n_2}{n_1}+A} \left(\frac{n_2}{n_1} \bar{f}^2 - A \tilde{f}^2 \right) + O(\varepsilon_1^3),
\end{split}
\label{expans2}
\end{align}
and solving \eqref{same_moments} and \eqref{zero_temp} for $f_1$ and $f_2$ leads to
\begin{align}
\begin{split}
f_1 = \frac{1}{A+ \frac{n_2}{n_1}\frac{m_1}{m_2}} \left(\bar{f}^0 + \tilde{\tilde{f}}^0 \right)+ \varepsilon_1 \frac{1}{A+ \frac{n_2}{n_1}\frac{m_1}{m_2}} \left( \bar{f}^1 + \tilde{\tilde{f}}^1 \right) + O(\varepsilon_1^2),
\end{split}
\label{expans3}
\end{align}
\begin{align}
\begin{split}
f_2 = \frac{1}{\frac{n_2}{n_1}\frac{m_1}{m_2} + A} \left( \frac{n_2}{n_1}\frac{m_1}{m_2} \bar{f}^0 -A \tilde{\tilde{f}}^0 \right) + \varepsilon_1 \frac{1}{\frac{n_2}{n_1}\frac{m_1}{m_2} + A} \left(\frac{n_2}{n_1}\frac{m_1}{m_2}  \bar{f}^1 - A \tilde{\tilde{f}}^1 \right)\\+ \varepsilon^2_1 \frac{1}{\frac{n_2}{n_1}\frac{m_1}{m_2} + A} \left(\frac{n_2}{n_1}\frac{m_1}{m_2}  \bar{f}^2 - A \tilde{\tilde{f}}^2 \right) + O(\varepsilon_1^2).
\end{split}
\label{expans4}
\end{align}
So for $\varepsilon_1 \rightarrow 0$ we see from \eqref{expans1} and \eqref{expans2} that the zeroths order are two Maxwell distributions $f^M_1$ and $f^M_2$ with the common velocity 
\begin{align*}
\bar{u}:=\frac{1}{A n_1 + n_2}(A n_1 u_1 + n_2 u_2),
\end{align*}
and from \eqref{expans3} and \eqref{expans4} that the zeroths order are two Maxwell distributions $f^M_1$ and $f^M_2$ with the equal temperature 
\begin{align*}
\bar{T}:= \frac{1}{A n_1 + n_2 \frac{m_1}{m_2}} (A n_1 + n_2) \frac{\bar{T}^0}{\bar{m}^0}.
\end{align*}
Remember from the remark below \eqref{BGK_um} that $f_1^M$ has density $n_1$ and $f_2^M$ has density $n_2$. \\
Using this we can deduce from \eqref{BGK_um} by inserting the expansions of $f_1$ and $f_2$
\begin{align}
\begin{split}
f_1 = \frac{1}{\frac{1}{\varepsilon_1}+ \frac{1}{\tilde{\varepsilon}_1}}(\frac{1}{\varepsilon_1}M_1 + \frac{1}{\tilde{\varepsilon}_1} M_{12}) - \frac{1}{\frac{1}{\varepsilon_1}+ \frac{1}{\tilde{\varepsilon}_1}} \int (\partial_t f_1^M + v \cdot \nabla_x  f_1^M) ) dv +O(\varepsilon_1^2), \\
f_2 = \frac{1}{\frac{1}{\varepsilon_2}+ \frac{1}{\tilde{\varepsilon}_2}}(\frac{1}{\varepsilon_2}M_2 + \frac{1}{\tilde{\varepsilon}_2} M_{21}) - \frac{1}{\frac{1}{\varepsilon_2}+ \frac{1}{\tilde{\varepsilon}_2}} \int (\partial_t f_2^M + v \cdot \nabla_x  f_2^M) dv+ O(\varepsilon_1^2).
\end{split}
\label{BGKexp}
\end{align}
\subsection{Expansion of the velocity, the pressure tensor and the heat flux}
 The exact macroscopic conservation equations that need to be closed in the two species  read as
 {\scriptsize
 \begin{multline*}
\\
\partial_t n_1 + \nabla_x \cdot(n_1 u_1)=0,
\\
\partial_t n_2 + \nabla_x \cdot(n_2 u_2)=0,
\\
 \partial_t( n_1 u_1)+\nabla_x \cdot \mathbb{P}_1 + \nabla_x \cdot ( n_1 u_1 \otimes u_1  ) = 
\frac{1}{\tilde{\varepsilon}_1} (1 - \delta) (u_2 - u_1) ,
\\
 \partial_t( n_2 u_2)+\nabla_x \cdot \mathbb{P}_2 \frac{m_1}{m_2}+ \nabla_x \cdot ( n_2 u_2 \otimes u_2  )  = 
\frac{1}{\tilde{\varepsilon}_1}  (1-\delta) (u_1-u_2),
\\
\partial_t \left(\frac{1}{2} n_1 |u_1|^2 + \frac{3}{2} n_1 T_1 \right) + \nabla_x \cdot Q_1  \\=  \frac{1}{2} \frac{1}{\tilde{\varepsilon}_1}((\delta^2-1) |u_1|^2 + (1- \delta)^2 |u_2|^2 + 2 \delta ( 1- \delta) u_1 \cdot u_2) + \frac{3}{2} \frac{1}{\tilde{\varepsilon}_1} ((1-\alpha) (T_2-T_1) + \frac{\gamma}{m_1} |u_1-u_2|^2),
\\
\partial_t \left(\frac{1}{2} n_2 |u_2|^2 + \frac{3}{2} n_2 T_2 \frac{m_1}{m_2} \right) + \nabla_x \cdot Q_2  \\=-\frac{1}{2} \frac{1}{\tilde{\varepsilon}_1}((\delta^2-1) |u_1|^2 + (1- \delta)^2 |u_2|^2 + 2 \delta ( 1- \delta) u_1 \cdot u_2) - \frac{3}{2} \frac{1}{\tilde{\varepsilon}_1} ((1-\alpha) (T_2-T_1) + \frac{\gamma}{m_1} |u_1-u_2|^2),
\\
\end{multline*}}
This is obtained by computing moments (multiplying \eqref{BGK6} by $(1,v,|v|^2)$, integrating with respect to $v$ and using the definitions of the macroscopic quantities in \eqref{momentsdim}),
where we need expressions for the species velocities, pressure tensors and energy fluxes
\begin{align}
 &u_k(x,t)=\frac{1}{n_k(x,t)}\int v f_k(x,v,t) dv, \label{expvel}\\
 &\mathbb{P}_k(x,t)= \int (v-u_k(x,t))\otimes (v-u_k(x,t))f_k(x,v,t) dv, \\
 &Q_k(x,t) = \frac{1}{2} \int |v|^2 v f_k(x,v,t) dv, \label{expheat}
\end{align}
for $k=1,2$. In the following we want to insert the expansions for $f_1^{\varepsilon_1}$ and $f_2^{\varepsilon_2}$ into these three integrals in order to see if we are able to fix the diffusion coefficient, the viscosity, the heat conductivity and the thermal diffusion parameter as described in section \ref{sec7.3.6.1}.

\subsubsection{Expansion of the velocities}
First consider the velocity. If we just insert \eqref{BGKexp} into \eqref{expvel}, this will lead to expansions of the form
\begin{align}
\begin{split}
 u_1^{\varepsilon_1} = u_2^{\varepsilon_1} + O(\varepsilon_1), \\
 u_2^{\varepsilon_1} = u_1^{\varepsilon_1} + O(\varepsilon_1).
 \end{split}
 \label{expvel1}
 \end{align}
 This is in accordance to our expectation that for $\varepsilon_1 \rightarrow 0$ the two velocities converge to a common value, but the expansion \eqref{expvel1} cannot be used to solve it for the two velocities $u_1^{\varepsilon_1}$ and $u_2^{\varepsilon_1}$. In order to do this we need an additional step. This is done as follows.
 The velocity of the ion expansion of the first term in \eqref{BGKexp} using the expression of $u_{12}$ is given by
\begin{align*}
\frac{1+\frac{1}{\beta_1} \delta}{1+\frac{1}{\beta_1}} u_1^{\varepsilon_1} + \frac{\frac{1}{\beta_1} (1- \delta)}{1+\frac{1}{\beta_1}} u_2^{\varepsilon_1}.
\end{align*}
We can split this expression into
\begin{align*}
\bar{u} + \frac{-A n_1 (1- \delta) + n_2 ( \beta_1 + \delta)}{(\beta_1 + 1) ( A n_1 + n_2)} u_1^{\varepsilon_1} + \frac{A n_1 ( 1 - \delta) - n_2 ( \beta_1 + \delta)}{(\beta_1 + 1) (An_1 + n_2)} u_2^{\varepsilon_1}.
\end{align*}
We denote 
\begin{align*}
c_1 :=  \frac{-A n_1 (1- \delta) + n_2 ( \beta_1 + \delta)}{(\beta_1 + 1) ( A n_1 + n_2)},
\end{align*}
so
\begin{align*}
\frac{1+\frac{1}{\beta_1} \delta}{1+\frac{1}{\beta_1}} u_1^{\varepsilon_1} + \frac{\frac{1}{\beta_1} (1- \delta)}{1+\frac{1}{\beta_1}} u_2^{\varepsilon_1}=\bar{u} + c_1 u_1^{\varepsilon_1}- c_1 u_2^{\varepsilon_1}.
\end{align*}
So we see from \eqref{BGKexp} that we get 
\begin{align*}
u_1^{\varepsilon_1} = \bar{u} + c_1 u_1^{\varepsilon_1} - c_1 u_2^{\varepsilon_1} - \frac{1}{\frac{1}{\varepsilon_1} + \frac{1}{\tilde{\varepsilon}_1}} \frac{1}{n_1} \int v \left( \partial_t f_1^M + \nabla_x \cdot (v f_1^M) \right) dv \\ + O(\varepsilon_1^2) .
\end{align*}
Solving this for $u_1^{\varepsilon_1}$ leads to
\begin{align}
\begin{split}
u_1^{\varepsilon_1} &= - \frac{c_1}{1-c_1} u_2^{\varepsilon_1} +\frac{1}{1-c_1} \bar{u}  \\-& \frac{1}{1-c_1} \frac{1}{\frac{1}{\varepsilon_1} + \frac{1}{\tilde{\varepsilon}_1}} \frac{1}{n_1} \int v \left( \partial_t f_1^M + \nabla_x \cdot (v f_1^M) \right) dv +\frac{1}{1-c_1} O(\varepsilon_1^2).
\end{split}
\label{expr1}
\end{align}
Similarly, we get for the second species
\begin{align}
\begin{split}
u_2^{\varepsilon_1}&=-\frac{A \frac{n_1}{n_2} c_1}{1- A \frac{n_1}{n_2} c_1 } u_1^{\varepsilon_1} + \frac{1}{1- A \frac{n_1}{n_2} c_1} \bar{u} \\-& \frac{1}{1-A \frac{n_1}{n_2} c_1} \frac{1}{\frac{1}{\varepsilon_2} + \frac{1}{\tilde{\varepsilon}_2}} \frac{1}{n_2} \int v \left( \partial_t f_2^M + \nabla_x \cdot (v f_2^M)\right) dv+ \frac{1}{1- A \frac{n_1}{n_2} c_1} O(\varepsilon_1^2).
\end{split}
\label{expr2}
\end{align}
Solving \eqref{expr1} and \eqref{expr2} for $u_1^{\varepsilon_1}$ and $u_2^{\varepsilon}$ we get
\begin{align}
\begin{split}
u_1^{\varepsilon_1} =\bar{u}- \frac{1- A \frac{n_1}{n_2}c_1}{1-A \frac{n_1}{n_2} c_1 -c_1}\frac{1}{\frac{1}{\varepsilon_1} + \frac{1}{\tilde{\varepsilon}_1}} \frac{1}{n_1} \int v \left( \partial_t f_1^M + \nabla_x \cdot (v f_1^M) \right) dv \\+ \frac{c_1}{1-A \frac{n_1}{n_2}c_1 - c_1}\frac{1}{\frac{1}{\varepsilon_2} + \frac{1}{\tilde{\varepsilon}_2}} \frac{1}{n_2} \int v \left( \partial_t f_2^M + \nabla_x \cdot (v f_2^M) \right) dv+ O(\varepsilon_1^2), \\
u_2^{\varepsilon_1} = \bar{u} - \frac{A \frac{n_1}{n_2} c_1}{-1+ c_1 + A \frac{n_1}{n_2} c_1}\frac{1}{\frac{1}{\varepsilon_1} + \frac{1}{\tilde{\varepsilon}_1}} \frac{1}{n_1} \int v \left( \partial_t f_1^M + \nabla_x \cdot (v f_1^M)\right) dv \\+ \frac{1-c_1}{-1 + c_1 + A \frac{n_1}{n_2}c_1}\frac{1}{\frac{1}{\varepsilon_2} + \frac{1}{\tilde{\varepsilon}_2}} \frac{1}{n_2} \int v \left( \partial_t f_2^M + \nabla_x \cdot (v f_2^M) \right) dv+ O(\varepsilon_1^2). \end{split}
\label{expvelint}
\end{align}

\begin{remark}
At this point we observe that we cannot use the parameter $\gamma$ from \eqref{contempdim} because $|u_1^{\varepsilon_1} - u_2^{\varepsilon_1}|^2$ is of order $\varepsilon_1^2$ and has no influence on the order $\varepsilon_1$.
 \end{remark}
 \begin{remark}
 According to section \ref{sec7.3.6.1} we  have a diffusion coefficient in front of the integrals in \eqref{expvelint}. In our expansion in \eqref{expvelint} this coefficient in front of the integrals depends on the free parameter $\delta$, since $c_1$ depends on $\delta$. This enables us to determine the free parameter $\delta$ later such that the coefficient in front of the integrals fits to the diffusion coefficient measured by experiments.
 \end{remark}
\subsubsection{Expansion of the temperature}
For the temperature we do the same trick as in the case of the velocities.
$\frac{m_1}{3 n_1}$ times the temperature of the ion expansion of the first term in \eqref{BGKexp} using the expression of $T_{12}$ is given by
\begin{align*}
\frac{1+\frac{1}{\beta_1} \alpha}{1+\frac{1}{\beta_1}}  T_1^{\varepsilon_1} + \frac{\frac{1}{\beta_1} (1- \alpha)}{1+\frac{1}{\beta_1}} T_2^{\varepsilon_1}.
\end{align*}
We can split this expression into
\begin{align*}
\bar{T} + \frac{-A n_1 (1- \alpha) + \frac{m_1}{m_2}n_2 (\beta_1 + \alpha)}{(\beta_1 +1)(An_1 + \frac{m_1}{m_2}n_2)} T_1^{\varepsilon_1} + \frac{A n_1 (1- \alpha) - \frac{m_1}{m_2}n_2 (\beta_1 + \alpha)}{(\beta_1 +1)(An_1 + \frac{m_1}{m_2}n_2)} T_2^{\varepsilon_1}.
\end{align*}
We denote 
\begin{align*}
c_2:=\frac{-A n_1 (1- \alpha) + \frac{m_1}{m_2}n_2 (\beta_1 + \alpha)}{(\beta_1 +1)(An_1 + \frac{m_1}{m_2}n_2)},
\end{align*}
so
\begin{align*}
\frac{1+\frac{1}{\beta_1} \alpha}{1+\frac{1}{\beta_1}}  T_1^{\varepsilon_1} + \frac{\frac{1}{\beta_1} (1- \alpha)}{1+\frac{1}{\beta_1}} T_2^{\varepsilon_1} = \bar{T} +c_2 T_1^{\varepsilon_1} - c_2 T_2^{\varepsilon_1}.
\end{align*}
We see from \eqref{BGKexp} 
\begin{align*}
\begin{split}
T_1^{\varepsilon_1} =  \bar{T} +c_2 T_1^{\varepsilon_1} - c_2 T_2^{\varepsilon_1} \\ - \frac{1}{\frac{1}{\varepsilon_1}+ \frac{1}{\tilde{\varepsilon}_1}} \frac{1}{3n_1}\int |v- \bar{u}|^2 (\partial_t f_1^M) + \nabla_x \cdot (v f_1^M) )dv+O(\varepsilon_1^2).
\end{split}
\end{align*}
Solving this for $T_1^{\varepsilon_1}$ leads to
\begin{align}
\begin{split}
T_1^{\varepsilon_1} = \frac{1}{1-c_2} \bar{T}  - \frac{c_2}{1-c_2} T_2^{\varepsilon_1} \\ - \frac{1}{\frac{1}{\varepsilon_1}+ \frac{1}{\tilde{\varepsilon}_1}} \frac{1}{3n_1} \frac{1}{1-c_2} \int |v- \bar{u}|^2 (\partial_t f_1^M + \nabla_x \cdot (v f_1^M) )dv \\ + \frac{1}{1-c_1}(\varepsilon_1^2).
\end{split}
\label{expr3}
\end{align}
Similarly, we get for the second species
\begin{align}
\begin{split}
T_2^{\varepsilon_1} = \frac{1}{1- A \frac{n_1}{n_2} \frac{m_2}{m_1} c_2} \bar{T}  - \frac{A \frac{n_1}{n_2} \frac{m_2}{m_1}c_2}{1- A \frac{n_1}{n_2} \frac{m_2}{m_1} c_2} T_1^{\varepsilon_1} \\ - \frac{1}{\frac{1}{\varepsilon_2}+ \frac{1}{\tilde{\varepsilon}_2}} \frac{m_2}{m_1} \frac{1}{3n_2}\frac{1}{1- A \frac{n_1}{n_2} \frac{m_2}{m_1} c_2}  \int |v- \bar{u}|^2 (\partial_t f_2^M) + \nabla_x \cdot (v f_2^M) )dv \\ + \frac{1}{1- A \frac{n_1}{n_2} \frac{m_2}{m_1}c_2} O(\varepsilon_1^2).
\end{split}
\label{expr4}
\end{align}
Solving \eqref{expr3} and \eqref{expr4} for $T_1^{\varepsilon_1}$ and $T_2^{\varepsilon}$ we get
{\small
\begin{align}
\begin{split}
T_1^{\varepsilon_1} = \bar{T}  -  \frac{1}{\frac{1}{\varepsilon_1}+ \frac{1}{\tilde{\varepsilon}_1}} \frac{1- A \frac{n_1}{n_2}\frac{m_2}{m_1} c_2}{1- A \frac{n_1}{n_2} \frac{m_2}{m_1}c_2 - c_2} \frac{1}{3 n_1}  \int |v- \bar{u}|^2 (\partial_t f_1^M + \nabla_x \cdot (v f_1^M) )dv \\ - \frac{1}{\frac{1}{\varepsilon_2}+ \frac{1}{\tilde{\varepsilon}_2}} \frac{m_2}{m_1}\frac{1}{3n_2} \frac{c_2}{1- A \frac{n_1}{n_2} \frac{m_2}{m_1} c_2 - c_2} \int |v- \bar{u}|^2 (\partial_t f_2^M) + \nabla_x \cdot (v f_2^M) )dv + O(\varepsilon_1^2),
\end{split}
\label{exptemp1}
\end{align}}
{\small
\begin{align}
\begin{split}
T_2^{\varepsilon_1} = \bar{T} &- \frac{1}{\frac{1}{\varepsilon_2}+ \frac{1}{\tilde{\varepsilon}_2}} \frac{1-c_2}{-1 + c_2 + A \frac{n_1}{n_2} \frac{m_2}{m_1} c_2} \frac{m_2}{m_1}\frac{1}{3 n_2}  \int |v- \bar{u}|^2 (\partial_t f_2^M + \nabla_x \cdot (v f_2^M) )dv \\ &- \frac{1}{\frac{1}{\varepsilon_1}+ \frac{1}{\tilde{\varepsilon}_1}} \frac{1}{3n_1} \frac{A \frac{n_1}{n_2} \frac{m_2}{m_1} c_2}{-1 + c_2 + A \frac{n_1}{n_2}\frac{m_2}{m_1} c_2} \int |v- \bar{u}|^2 (\partial_t f_1^M) + \nabla_x \cdot (v f_1^M) )dv \\&+ O(\varepsilon_1^2).
\end{split}
\label{exptemp2}
\end{align}}
If we compute the expansion of the non-diagonal elements of the pressure tensor, the zeroth order vanishes and we obtain
\begin{align}
\begin{split}
&\int (v_l - u_{1,l})(v_m - u_{1,m}) f_1 dv = \\&- \frac{1}{\frac{1}{\varepsilon_1}+ \frac{1}{\tilde{\varepsilon}_1}} \int (v_l- u_{1,l})(v_m-u_{1,m}) (\partial_t f_1^M + \nabla_x \cdot (v f_1^M) )dv + O(\varepsilon_1^2),
\\
&\int (v_l - u_{2,l})(v_m - u_{2,m}) f_2 dv = \\&- \frac{1}{\frac{1}{\varepsilon_2}+ \frac{1}{\tilde{\varepsilon}_2}} \int (v_l- u_{2,l})(v_m-u_{2,m}) (\partial_t f_2^M + \nabla_x \cdot (v f_2^M) )dv + O(\varepsilon_1^2).
\end{split}
\label{express}
\end{align}
\begin{remark}
 According to section \ref{sec7.3.6.1} we  have a viscosity coefficient in front of the integrals of the expansion of the pressure tensor in \eqref{express}. This coefficient depends on our free parameter $\alpha$, since the constant $c_2$ depends on the undetermined parameter $\alpha$ from \eqref{contempdim}. This enables us to determine the parameter $\alpha$ later such that it fits to the viscosity coefficient measured by experiments.
\end{remark}
\subsubsection{Expansion of the energy flux}
If we insert the expansion \eqref{BGKexp} into \eqref{expheat} and use the definition of the mixture Maxwell distributions \eqref{BGKmix5}, we get
{\small
\begin{align}
\begin{split}
&\frac{1}{2} \int |v|^2 v f_1 dv = \frac{5}{2} \frac{1}{1+ \beta_1 \frac{n_1}{n_2}} n_1  \\ &[ (\beta_1 \frac{n_1}{n_2}+ \alpha \delta) T_1^{\varepsilon_1} u_1^{\varepsilon_1} + \alpha (1- \delta) T_1^{\varepsilon_1} u_2^{\varepsilon_1} + (1- \alpha) \delta T_2^{\varepsilon_1} u_1^{\varepsilon_1} + (1- \alpha)(1-\delta) T_2^{\varepsilon_1} u_2^{\varepsilon_1}]\\ &+ \frac{1}{2} \frac{1}{1+ \beta_1 \frac{n_1}{n_2}} n_1 [(\beta_1 \frac{n_1}{n_2}  |u_1^{\varepsilon_1}|^2 u_1^{\varepsilon_1} +|\delta u_1^{\varepsilon_1} + (1- \delta) u_2^{\varepsilon_1}|^2 ( \delta u_1^{\varepsilon_1} + (1- \delta) u_2^{\varepsilon_1})]  \\ &-\frac{1}{2}  \frac{1}{\frac{1}{\varepsilon_1}+ \frac{1}{\tilde{\varepsilon}_1}} ( \int |v|^2 v (\partial_t f_1^M + \nabla_x \cdot (v f_1^M) )dv + O(\varepsilon_1^2),
\end{split}
\label{exheat1}
\end{align}}
in which we can insert the expansions for the velocities and the temperatures. The zeroth order is given by
$$\frac{5}{2}  n_1  (\bar{T} \bar{u} + |\bar{u}|^2 \bar{u}).$$
The energy flux for species $2$ is given by
\begin{align}
\begin{split}
&\frac{m_2}{m_1}\frac{1}{2} \int |v|^2 v f_2 dv =  \frac{5}{2} \frac{1}{1+ \beta_1 \frac{n_1}{n_2}} n_2  \\ &[ \beta_2\frac{n_2}{n_1} T_2^{\varepsilon_1} u_2^{\varepsilon_1} + ( \alpha T_1^{\varepsilon_1} + (1- \alpha)T_2^{\varepsilon_1})(u_2^{\varepsilon_1} - \frac{m_1}{m_2}\varepsilon (1- \delta) (u_2^{\varepsilon_1}-u_1^{\varepsilon_1})]\\ &+ \frac{1}{2} \frac{1}{1+ \beta_2 \frac{n_2}{n_1}} n_1 [\beta_2 \frac{n_2}{n_1}|u_2^{\varepsilon_1}|^2 u_2^{\varepsilon_1} + | u_2^{\varepsilon_1}\\& - \frac{m_1}{m_2} \varepsilon (1- \delta)(u_2^{\varepsilon_1}-u_1^{\varepsilon_1})|^2 (u_2^{\varepsilon_1} - \frac{m_1}{m_2 } \varepsilon (1- \delta) (u_2^{\varepsilon_1} -u_1^{\varepsilon_1}))] \\ &- \frac{1}{2}  \frac{1}{\frac{1}{\varepsilon_2}+ \frac{1}{\tilde{\varepsilon}_2}} ( \int |v|^2 v (\partial_t f_2^M + \nabla_x \cdot (v f_2^M) )dv + O(\varepsilon_1^2).
\end{split}
\label{exheat2}
\end{align}
Here, the zeroth order is given by
$$ \frac{5}{2}  n_2( \bar{T} \bar{u} + |\bar{u}|^2 \bar{u}).$$
\begin{remark}
 According to section \ref{sec7.3.6.1} we  have the thermal conductivity and the thermal diffusion parameter in front of the integrals in the expansion of the energy flux in \eqref{exheat1} and \eqref{exheat2}  which we want to be able to fix  the value which one obtains from experiments. We observe that we do not have more free parameters since $\alpha$ and $\delta$ are already fixed in order to obtain the right viscosity and diffusion coefficient.   But if we perform the extension of the BGK to an ES-BGK model we will obtain an additional free parameter in the pressure tensor similar as it is done in the one species case. This allows to determine the additional free parameter in order to get the right viscosity coefficient. The parameter $\alpha$ remains undetermined such that we can use $\alpha$ to fix the thermal conductivity in the heat flux. A fourth free parameter is gained if we treat the collision frequency $\nu_{12}$ as a free parameter as it is done in \cite{Groppi}. With this we can determine the diffusion coefficient such that the parameter $\delta$ remains undetermined and we can fix $\delta$ in the heat flux expansion such that the thermal diffusion parameter has the right physical value.
\end{remark}

\section{Extensions to an ES-BGK approximation}
\label{sec7.4}
In this section we want to present three possible extensions to an ES-BGK model for gas mixtures. The first one has the attempt to keep it as simple as possible and extend only the Maxwell distribution in the single relaxation term. This describes the relaxation of the distribution function to an equilibrium distribution due to interactions of the species with itself. The two other extensions try to do it more symmetrically and extend every Maxwell distribution. The first ansatz in this case is to extend it exactly analogously as in the one species case and the second ansatz proposes a different extension which is motivated by the physical intuition of the physicist Holway who invented the ES-BGK model in \cite{Holway}. These models are also presented in \cite{Pirner2} by Klingenberg, Pirner and Puppo. For the reader's convenience we want to repeat it here.

 The simplest choice is to only  replace the collision operators which represent the collisions of a species with itself by the ES-BGK collision operator for one species suggested in \cite{AndriesPerthame2001}. Then the model can be written as:

\begin{align} \begin{split} \label{ESBGKsimple}
\partial_t f_k + v \cdot \nabla_x  f_k   &= \nu_{kk} n_k (G_k - f_k ) + \nu_{kj} n_j ( M_{kj} - f_k), \quad k,j=1,2, ~ j \neq k,
\end{split}
\end{align}
with the modified Maxwell distributions
\begin{align} 
\begin{split}
G_k(x,v,t)&= \frac{n_k}{\sqrt{det(2 \pi \frac{\mathcal{T}_k}{m_k})}} \exp \left(- \frac{1}{2} (v-u_k) \cdot \left(\frac{\mathcal{T}_k}{m_k} \right)^{-1} \cdot (v-u_k)\right), 
\end{split}
\label{ESBGKmixsimple}
\end{align}
for $k=1,2$ and $M_{12}, M_{21}$ the Maxwell distributions described in the previous sections.
 $G_1$ and $G_2$ have the same densities, velocities and pressure tensors as $f_1$ and $f_2$, respectively, so we still guarantee the conservation of mass, momentum and energy in interactions of one species with itself. 
Since the first term describes the interactions of a species with itself, it should correspond to the single ES-BGK collision operator suggested in section \ref{sec7.1}. So we choose $\mathcal{T}_1$ and $\mathcal{T}_2$ as 
\begin{align}
\mathcal{T}_k= (1- \tilde{\mu}_k) T_k \textbf{1} + \tilde{\mu}_k \frac{\mathbb{P}_k}{n_k}, 
\label{ten}
\end{align}
 with $\tilde{\mu}_k \in \mathbb{R}$, $k=1,2$ being free parameters which we can choose in a way to fix physical parameters in the Navier-Stokes equations. So, all in all, together with the parameters in the mixture Maxwell distributions \eqref{convexvel5} and \eqref{contemp5} we now have five free parameters, see \eqref{convexvel5}, and \eqref{contemp5}  for the other free parameters. 

We can prove that this model is well-defined, satisfies conservation of mass, momentum and energy; that it has an H-Theorem and we can characterize the equilibrium as Maxwellians with the same mean velocity and temperature. For the proofs see \cite{Pirner2}.
In the following, we also want to replace the scalar temperatures in the mixture Maxwell distributions by a tensor.
In the first model the terms $(v_j - u_{kj})f_k (v_i - u_{ki})$ for $i\neq j$ do not appear in the relaxation operator. To obtain a more detailed description of the viscous effects in the mixture we take into account these cross terms during the relaxation process. Then the model can be written as:
\begin{align} \begin{split} \label{ESBGK5}
\partial_t f_k + v \cdot \nabla_x  f_k   &= \nu_{kk} n_k (G_k - f_k ) + \nu_{kj} n_j ( G_{kj} - f_k), \quad k=1,2, k \neq j, 
\end{split}
\end{align}
with the modified Maxwell distributions
\begin{align} 
\begin{split}
G_k(x,v,t)&= \frac{n_k}{\sqrt{\det(2 \pi \frac{\mathcal{T}_k}{m_k})}} \exp(- \frac{1}{2} (v-u_k) \cdot \left(\frac{\mathcal{T}_k}{m_k} \right)^{-1} \cdot (v-u_k)),
\\
G_{kj}(x,v,t) &= \frac{n_k}{\sqrt{\det(2 \pi \frac{\mathcal{T}_{kj}}{m_k})}} \exp(- \frac{1}{2} (v-u_{kj}) \cdot \left(\frac{\mathcal{T}_{kj}}{m_k} \right)^{-1} \cdot (v-u_{kj})),
\end{split}
\label{ESBGKmix5}
\end{align}
for $k=1,2, k \neq j$ and with $\mathcal{T}_k$ defined by \eqref{ten}.
Again, the conservation of mass, momentum and energy in interactions of one species with itself is ensured by this choice of the modified Maxwell distributions $G_1$ and $G_2$ which have the same densities, velocities and pressure tensor as $f_1$ and $f_2$, respectively. In addition, the choice of the densities in $G_{12}$ and $G_{21}$, we also guarantee conservation of mass in interactions of one species with the other one.
If we extend $T_{12}$ and $T_{21}$ in the same fashion to a tensor as in the case of one species, we obtain
{\small
\begin{align}
\mathcal{T}_{12} &=(1- \tilde{\mu}_{12}) (\alpha T_1 + (1- \alpha) T_2 ) \textbf{1} + \tilde{\mu}_{12} \frac{\alpha \mathbb{P}_1 + (1- \alpha) \mathbb{P}_2 }{n_1}+ \gamma |u_1 - u_2|^2  \textbf{1}, \label{tau12a}
 \\
 \begin{split}
\mathcal{T}_{21} &= (1- \tilde{\mu}_{21} ) ((1- \varepsilon (1- \alpha)) T_2 + \varepsilon (1- \alpha) T_1)  \textbf{1} \\&+ \tilde{\mu}_{21} \frac{(1- \varepsilon (1- \alpha)) \mathbb{P}_2 + \varepsilon (1- \alpha) \mathbb{P}_1 }{n_2}\\&+ (\frac{1}{3} \varepsilon m_1 (1- \delta)( \frac{m_1}{m_2} \varepsilon ( \delta - 1) + \delta + 1) - \varepsilon \gamma)|u_1 - u_2|^2 \textbf{1}. \label{tau21a}
\end{split}
\end{align}}
If we check the equilibrium distributions (see \cite{Pirner2} for details), we obtain a dependence of  $\tilde{\mu}_{12}$ on $\tilde{\mu}_1$ and of $\tilde{\mu}_{21}$ on $\tilde{\mu}_2$.

An alternative choice to \eqref{tau12a},\eqref{tau21a} is given by
\begin{align}
\mathcal{T}_{12} &= \alpha \frac{\mathbb{P}_1}{n_1} + ( 1 - \alpha) T_2 \textbf{1} + \gamma |u_1 - u_2|^2 \textbf{1}, \label{tau12}
 \\
 \begin{split}
\mathcal{T}_{21} &= (1-\varepsilon (1-\alpha ) )\frac{\mathbb{P}_2}{n_2} + \varepsilon( 1 - \alpha) T_1 \textbf{1} \\&+   ( \frac{1}{3} \varepsilon m_1 (1- \delta)( \frac{m_1}{m_2} \varepsilon ( \delta - 1) + \delta + 1) - \varepsilon \gamma)|u_1 - u_2|^2 \textbf{1}.\end{split} \label{tau21}
\end{align}
This choice of $\mathcal{T}_{12}$ still contains the temperature of gas $1$, since the trace of the pressure tensor $\frac{\mathbb{P}_1}{n_1}$ is the temperature $T_1$.

In \eqref{tau12} compared to \eqref{tau12a} we replace only the temperature $T_1$ of species $1$ by the pressure tensor $\mathbb{P}_1$ while we keep the temperature $T_2$. This asymmetric choice can be motivated by the theory of "persistence of velocity" described  in  chapter \ref{sec7.1.3}. 
We observed in section \ref{sec7.1.3}, that in the one species case the tensor $\mathcal{T}$ can be rewritten as
$$ \mathcal{T} = T \textbf{1} + \tilde{\mu} ~ \text{traceless}[\mathbb{P}], $$
where the memory of the off-equilibrium part of species $1$ is contained in $\tilde{\mu} ~ \text{traceless}[\mathbb{P}].$ 
Doing this analogously for two species we arrive at 
$$\mathcal{T}_{12} = T_{12} \textbf{1} + \frac{\alpha}{n_1} \text{traceless}[\mathbb{P}_1].$$
If we plug in the definition of $T_{12}$ given by \eqref{contemp5}, we end up with \eqref{tau12}.

In both choices of the tensors $\mathcal{T}_{12}$ and $\mathcal{T}_{21}$ we can prove that the models are well-defined and that we have conservation of the number of particles, conservation of total momentum and conservation of total energy. Moreover, we are able to prove an H-Theorem and we can specify the equilibrium distributions. For the proofs see \cite{Pirner2}.

\section*{Conclusion}
The goal in the paper is to derive the four transport coefficients of a two species Navier-Stokes model derived from a two species kinetic model. To this end we take a two species BGK model \cite{Pirner} and are able to perform the Chapman Enskog expansion to derive three of the transport coefficients. A special feature of using model \cite{Pirner} is that this kinetic model has free parameters that can to be chosen a posteriori such that transport coefficients fit with an experiment. After this is done one transport coefficient still needs to be determined. For this we suggest extensions of the BGK model \cite{Pirner} to an ES BGK model \cite{Pirner2}.

\section*{Acknowledgements}
We thank Gabriella Puppo for suggesting this topic to us.

\medskip
Received xxxx 20xx; revised xxxx 20xx.
\medskip

\end{document}